\numberwithin{equation}{section}
\theoremstyle{plain}
\newtheorem{theorem}[equation]{Theorem}
\newtheorem{lemma}[equation]{Lemma}
\theoremstyle{definition}
\theoremstyle{remark}
\newcommand{\bR}{\mathbb R}
\newcommand{\bZ}{\mathbb Z}
\newcommand{\bE}{\mathbb E}
\newcommand\cC{\mathcal{C}}
\newcommand\U{\mathcal{U}}
\newcommand{\set}[1]{\left\{#1\right\}}
\newcommand{\norm}[1]{\lVert#1\rVert}
\newcommand{\abs}[1]{\left\lvert#1\right\rvert}
\renewcommand{\epsilon}{\varepsilon}
\newcommand{\Co}{\mathcal{C}^{1,2}}
\newcommand{\Ho}{\mathcal{C}^{1+\alpha/2,2+\alpha}}
\begin{document}
\title[On a degenerate parabolic equation]{On a degenerate parabolic equation arising in pricing of Asian options}

\author{Seick Kim}
\address{Department of Mathematics, Yonsei University, 262 Seongsanno, Seodaemun-gu, Seoul 120-749, Korea}
\email{kimseick@yonsei.ac.kr}
\subjclass[2000]{35B65, 35K20, 91B28}
\keywords{asian options; degenerate parabolic equation}

\begin{abstract}
We study a certain one dimensional, degenerate parabolic partial differential
equation with a boundary condition which arises in pricing of Asian options. 
Due to degeneracy of the partial differential operator and the non-smooth
boundary condition, regularity of the generalized solution of such a
problem remained unclear.
We prove that the generalized solution of the problem is indeed a classical
solution.
\end{abstract}

\maketitle

\section{Introduction and Main result}\label{sec:intro}
In \cite{Vecer02}, Ve\v{c}e\v{r} proposed a unified method for pricing Asian options, which lead to a simple one-dimensional partial differential equation
\begin{equation}
\label{eq:I03c}
u_t+\tfrac{1}{2}\left(x-e^{-\int_0^t d\nu(s)}q(t) \right)^2 \sigma^2 u_{xx}=0
\end{equation}
with the boundary condition
\begin{equation}
\label{eq:I04d}
u(T,x)=(x-K_1)_{+}:= \max(x-K_1,0).
\end{equation}
Here, $\nu(t)$ is the measure representing the dividend yield, $\sigma$
is the volatility of the underlying asset, $q(t)$ is the trading strategy
given by
\begin{equation*}
q(t)=\exp\left\{-\int_t^T \,d\nu(s)\right\}\cdot\int_t^T
\exp\left\{-r(T-s)+\int_s^T \,d\nu(\tau)\right\}\,d\mu(s),
\end{equation*}
where $r$ is the interest rate and $\mu(t)$ represents a general weighting
factor.
In the fixed strike Asian call option, we have $K_1=0$ in the boundary
condition \eqref{eq:I04d}; see \cite{Vecer01, Vecer02} for details.
If we assume that $d\mu(t)=\rho(t)\,dt$ for some $\rho\in L^\infty([0,T])$ 
satisfying $0<\rho_0\leq\rho(t)$, then it is readily seen that
\begin{equation*}
e^{-\int_0^t d\nu(s)}q(t)=c\int_t^T
\exp\left\{-r(T-s)+\int_s^T \,d\nu(\tau)\right\}\,d\mu(s)
\quad \left(c=e^{-\int_0^T\,d\nu(s)} >0\right)
\end{equation*}
is a monotone decreasing Lipschitz continuous function.
We are thus lead to consider the following one-dimensional parabolic PDE
\begin{equation}
\label{eq:I05e}
u_t+\tfrac{1}{2}(b(t)-x)^2\,u_{xx}=0
\end{equation}
in $H_T:=(0,T)\times \bR$ with the boundary condition
\begin{equation}
\label{eq:I06a}
u(T,x)=x_{+},
\end{equation}
where $b(t)$ is a Lipschitz continuous function defined on $[0,T]$
such that $b(T)=0$ and 
\begin{equation}
\label{eq:I07b}
m_1\le -b'(t) \le m_2,\,\text{ for a.e. }\,t\in(0,T)\,\text{ for some }\,
m_1, m_2>0.
\end{equation}

In this article we are mainly concerned with regularity of the (generalized)
solution $u(t,x)$ of the problem \eqref{eq:I05e}, \eqref{eq:I06a}.
It is a rather nontrivial task to show that the problem \eqref{eq:I05e},
\eqref{eq:I06a} has a solution in the classical sense.
First of all, it should be noted that even though the coefficient which appears
in \eqref{eq:I05e} is Lipschitz continuous, the classical approach based on 
Schauder theory is not applicable here, for the operator in \eqref{eq:I05e}
becomes degenerate along the curve $x=b(t)$.
Nevertheless, it is possible to show that the problem \eqref{eq:I05e},
\eqref{eq:I06a} admits the ``probabilistic'' solution: Let
\begin{equation}
\label{eq:I09u}
u(t,x):=\bE f(X_{T}(t,x)),
\end{equation}
where $f(x):=x_{+}$ and $X_s$ is the stochastic process which satisfies,
for $t\in[0,T]$ and $x\in\bR$,
\begin{equation}
\label{eq:I08z}
\left\{\begin{array}{l}
dX_s(t,x)=(b_s-X_s(t,x))\,dw_s,\quad s\ge t, \quad (\,b_s=b(s)\,)\\
X_{t}(t,x)=x.
\end{array}\right.
\end{equation}
It is known that such a process $X_t$ exists and that if $f$ is twice
continuously differentiable, then $u(t,x)$ given by \eqref{eq:I09u} is
a classical solution of
\eqref{eq:I05e} in $H_T$ (i.e., $u(t,x)$ is continuously
differentiable with respect $t$ and twice continuously differentiable with
respect to $x$ in $H_T$ and satisfies \eqref{eq:I05e} there)
with the boundary condition $u(T,x)=f(x)$;
see e.g. \cite{Krylov}.
Unfortunately, $f(x)=x_{+}$ is not twice continuously differentiable and
the above method is not directly applicable here.
On the other hand, it should be also noted that if $b(t)$ is smooth enough and
$b'(t)\neq 0$ everywhere, then the differential operator in \eqref{eq:I05e}
satisfies H\"ormander's conditions for hypoellipticity (see \cite{Hormander}).
Therefore, in this case, it is not hard to see that $u(t,x)$ given by
\eqref{eq:I09u} becomes a classical solution of the problem \eqref{eq:I05e},
\eqref{eq:I06a}.
However, H\"ormander's theorem is not available under a mere assumption that
$b(t)$ is a Lipschitz continuous function satisfying \eqref{eq:I07b}.

The main goal of this article is to present a technique to prove that
the generalized solution $u(t,x)$ of the problem \eqref{eq:I05e},
\eqref{eq:I06a} is indeed a classical solution.
Let us now state our main theorem.
\begin{theorem}
\label{thm:I01}
For $t\in[0,T]$ and $x\in\bR$, let $X_s=X_s(t,x)$ be the stochastic process
which satisfies \eqref{eq:I08z} and let $u(t,x)$ be defined as in
\eqref{eq:I09u} with $f(x):=x_{+}$.
Then $u(t,x)$ is a classical solution of the equation \eqref{eq:I05e} in
$H_T=(0,T)\times \bR$ satisfying the boundary condition \eqref{eq:I06a}.
\end{theorem}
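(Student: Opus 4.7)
\medskip
\noindent\emph{Proof proposal.}
The strategy is to approximate the non-$C^2$ datum $f(x)=x_+$ by smooth convex functions, invoke the probabilistic representation cited above, and then pass to the limit while controlling the parabolic regularity. Choose $f_n\in C^2(\bR)$ convex, non-decreasing, with $0\le f_n'\le 1$, $f_n\equiv f$ outside $(-1/n,1/n)$, and $f_n\to f$ uniformly on compacts. Set $u_n(t,x):=\bE f_n(X_T(t,x))$. By the cited result of Krylov, each $u_n\in\Cloc(H_T)$ and classically solves \eqref{eq:I05e} with $u_n(T,\cdot)=f_n$. Standard $L^2$-bounds for $X_T(t,x)$ from \eqref{eq:I08z}, together with $|f_n-f|\le 1/n$, give $u_n\to u$ locally uniformly on $[0,T]\times\bR$, so $u$ is at least continuous and attains the boundary datum.

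Away from the degenerate curve $\Gamma:=\{(t,b(t)):t\in[0,T]\}$, the coefficient $(b(t)-x)^2$ is bounded below on every compact set, so \eqref{eq:I05e} is uniformly parabolic there with Lipschitz coefficients. Interior parabolic Schauder estimates then yield uniform control of $\|u_n\|_{\Ho(K)}$ on every compact $K\subset H_T\setminus\Gamma$; passing to a diagonal subsequence, $u_n\to u$ in $\mathcal{C}^{1,2}$ locally on $H_T\setminus\Gamma$, so $u$ solves \eqref{eq:I05e} classically there. The whole difficulty is to upgrade this to classical regularity \emph{across} $\Gamma$.

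To that end, straighten the curve by setting $y:=x-b(t)$ and $\tilde u(t,y):=u(t,y+b(t))$, which converts \eqref{eq:I05e} into $\tilde u_t-b'(t)\tilde u_y+\tfrac12 y^2\tilde u_{yy}=0$ with degeneracy at $y=0$, and the SDE into the linear equation $dY_s=-b'_s\,ds-Y_s\,dw_s$, $Y_t=y$. Because this SDE is linear in $Y$, its flow is affine in $y$,
\[
Y_T(t,y)=yM_T+R_T,\qquad M_T=\exp\!\Bigl(-\!\!\int_t^T dw_s-\tfrac{T-t}{2}\Bigr),
\]
with $R_T$ a $y$-independent random variable, and therefore
\[
\tilde u(t,y)=\bE\bigl[M_T(y-Z)_+\bigr],\qquad Z:=-R_T/M_T=\int_t^T b'_\tau\,M_\tau^{-1}\,d\tau.
\]
This is convex in $y$, and the first derivative $\tilde u_y(t,y)=\bE[M_T\mathbf{1}_{\{Z<y\}}]$ is continuous once $Z$ has no atom, which is automatic. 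For the second derivative, I would exploit that the Malliavin derivative $D_sZ=\int_s^T b'_\tau M_\tau^{-1}\,d\tau$ is strictly negative for $s<T$ by \eqref{eq:I07b}, which produces a quantitative lower bound on the Malliavin covariance $\int_t^T(D_sZ)^2\,ds$ and hence a smooth density for $Z$. Either the Malliavin integration-by-parts formula or a direct manipulation of the joint density of $(M_T,Z)$ then represents $\tilde u_{yy}(t,y)$ as a continuous function of $(t,y)$ up to and across $y=0$. The transformed PDE yields continuity of $\tilde u_t$, and reverting the change of variables gives $u\in\Cloc(H_T)$, completing the proof.

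The principal obstacle is this last step. Everything up to the straightening of $\Gamma$ is standard parabolic theory and dominated convergence; continuity of $u_{xx}$ \emph{across} the degenerate curve, however, is inaccessible to uniformly parabolic tools. The plan relies essentially on two special features: the linearity of \eqref{eq:I08z} in $X$, which makes the $y$-dependence of $Y_T(t,y)$ affine and therefore reduces the question to smoothness of the law of an explicit Brownian functional $Z$, and the nondegeneracy \eqref{eq:I07b}, which supplies the uniform Malliavin lower bound that renders this smoothness quantitative.
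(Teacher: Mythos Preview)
Your proposal is essentially sound but takes a genuinely different route from the paper. Both arguments agree up to the point where $u$ is shown to be a classical solution off $\Gamma$ via smooth approximation and interior Schauder estimates; they diverge on how to cross $\Gamma$. The paper first splits $f=f_1+f_2$ with $f_1(x)=x$ and $f_2(x)=(-x)_+$, so that $u_1$ is automatically classical, and uses the explicit solution of the linear SDE (the same formula you write for $Y_T$) to show $u_2\equiv 0$ on $\{x\ge b(t)\}$. The core of the paper is then a purely PDE barrier argument: after a parabolic rescaling $T(t,x)=((t-t_0)/r,(x-x_0)/cr^{3/2})$ at a point of $\Gamma$, the rescaled coefficient satisfies $0\le a\le 1$ and the rescaled $u_2$ is dominated by a heat-kernel supersolution, giving the Gaussian decay $|u_2|\lesssim r^{1/2}e^{-k_0/r}$ near $\Gamma$; interior Schauder transfers this decay to $u_{2,x},u_{2,xx},u_{2,t}$. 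No density or Malliavin argument appears.

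Your approach instead pushes the affine representation further, reducing the question to regularity of $y\mapsto \bE[M_T(y-Z)_+]$ with $Z=\int_t^T b'_\tau M_\tau^{-1}\,d\tau$, and handles the second derivative by Malliavin integration by parts. This is a legitimate alternative: the nondegeneracy \eqref{eq:I07b} does give $D_sZ<0$ and hence a.s.\ positive Malliavin covariance, and since $Z<0$ a.s., the indicator in the IBP representation of $\tilde u_{yy}$ vanishes as $y\to 0^-$, matching $\tilde u_{yy}\equiv 0$ for $y>0$. What your sketch leaves open is the genuinely technical part---negative moments of $\|DZ\|_{L^2}^2$ and uniform control of the Malliavin weight as the initial time $t$ varies on compacts of $(0,T)$---which is where the work lies. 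One minor correction: you cannot read continuity of $\tilde u_t$ from the transformed equation, since its drift coefficient $b'(t)$ is only $L^\infty$; you should instead recover $u_t=-\tfrac12(x-b(t))^2u_{xx}$ from the \emph{original} equation once $u_{xx}$ is known to be continuous. The paper's barrier method is more elementary (only the maximum principle) and generalizes to coefficients degenerating like $|\phi(x)-t|^\mu$ in $\bR^n$; your method is tied to the one-dimensional linear SDE but, if completed, gives a direct probabilistic proof.
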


The organization of this paper is as follows. In Sec.~\ref{sec:notation},
we introduce some notations and present a preliminary lemma which will be
used in the proof of the main result.
In Sec.~\ref{sec:proof}, we give the proof of our main result,
Theorem~\ref{thm:I01}.
An outline of the proof is as follows.
We first split $u=u_1+u_2$, where $u_i$ are the probabilistic
solutions of \eqref{eq:I05e} satisfying $u_i(T,x)=f_i(x)$
with $f_1(x)=x$ and $f_2(x)=(-x)_{+}$.
It can be readily seen that $u_1$ is a classical solution of \eqref{eq:I05e}
in $H_T$.
Next, we show that $u_2\equiv 0$ in the set
$\set{(t,x)\in H_T: x\geq b(t)}$.
Then, by using a suitable rescaling and the lemma in Sec.~\ref{sec:notation},
we show that $u_2$ decays very rapidly to zero near the curve $x=b(t)$.
This is the key point of the proof.
Then, we apply the interior Schauder estimates to $u_2$ to conclude that
$\partial_t u_2$, $\partial_x u_2$, and $\partial_{xx} u_2$
all decay rapidly to zero near the curve $x=b(t)$, from which we will be able
to complete the proof.
Finally, In Sec.~\ref{sec:extra}, we reformulate the key lemma of the proof
in more general settings, in the hope that this technique might be useful
to some other problems as well.

\section{Notations and preliminaries}\label{sec:notation}
\subsection{Some notations}\label{sec:N01}
We introduce some notations which will be used in the proof.
We define the parabolic distance between the points $z_1=(t_1,x_1)$ and
$z_2=(t_2,x_2)$ as
\begin{equation*}
|z_1-z_2|_p:=\max(\sqrt{|t_1-t_2|},|x_1-x_2|).
\end{equation*}
Let $\alpha\in(0,1)$ be a fixed constant.
If $u$ is a function in a domain $Q\subset \bR^2$, we denote
\begin{align*}
[u]_{\alpha/2,\alpha;Q}&=\sup_{\substack{z_1\neq z_2\\ z_1,z_2\in Q}}
\frac{|u(z_1)-u(z_2)|}{|z_1-z_2|_p^\alpha},\quad
|u|_{0;Q}=\sup_Q |u|,\\
|u|_{\alpha/2,\alpha;Q}&=|u|_{0;Q}+[u]_{\alpha/2,\alpha;Q}.
\end{align*}
By $\cC^{\alpha/2,\alpha}(Q)$ we denote the space of all functions for which
$|u|_{\alpha/2,\alpha;Q}<\infty$.
We also introduce the space $\Ho(Q)$ as the set of all functions $u$ defined
in $Q$ for which both
\begin{align*}
[u]_{1+\alpha/2,2+\alpha;Q}&:=[u_t]_{\alpha/2,\alpha;Q}+
[u_{xx}]_{\alpha/2,\alpha;Q}<\infty\quad\text{and}\\
|u|_{1+\alpha/2,2+\alpha;Q}&:=|u|_{0;Q}+|u_x|_{0;Q}+|u_t|_{0,Q}+|u_{xx}|_{0,Q}+
[u]_{1+\alpha/2,2+\alpha;Q}<\infty.
\end{align*}
The function space $\Co(Q)$ denotes the set of all functions defined
in $Q$ for which
\begin{equation*}
|u|_{0;Q}+|u_x|_{0;Q}+|u_t|_{0,Q}+|u_{xx}|_{0,Q}<\infty.
\end{equation*}
We say $u\in \Ho_{loc}(Q)$ if $u \in \Ho(Q')$ for all compact set $Q'\Subset Q$
and similarly, $u\in \Co_{loc}(Q)$ if $u \in \Co(Q')$ for all compact set
$Q'\Subset Q$.
\subsection{A lemma on Gaussian estimates}\label{sec:G01}
Let $R>0$ be fixed and $g(x)$ be a continuous function defined on $[-R,R]$
satisfying $1/2\leq g(x)\leq 3/2$ for $x\in [-R,R]$.
We denote
\begin{align*}
Q&:=\{(t,x)\in \bR^2: 0<t<2, |x|<R\},\\
\Omega&:=\{(t,x)\in Q: t>g(x)\},\quad \Sigma:=\{(t,x)\in Q: t=g(x) \}.
\end{align*}
\begin{lemma}
\label{lem:G01}
Let $\Omega$ and $\Sigma$ be defined as above and let $a(t,x)$ be a function
satisfying
\begin{equation}
\label{eq:G01q}
0\leq a(t,x)\leq 1,\quad\forall (t,x)\in \Omega.
\end{equation}
Assume that $u\in \Co_{loc}(\Omega)\cap \cC(\overline \Omega)$ and satisfies
\begin{equation*}
\left\{\begin{array}{l}
Lu:=u_t-a(t,x) u_{xx}=0\quad\text{in}\quad\Omega\\
u=0\quad\text{on}\quad\Sigma.
\end{array}\right.
\end{equation*}
Then, we have the following estimate:
\begin{equation}
\label{eq:G01y}
|u|_{0;\Omega'} \leq
(16/\sqrt{2\pi})\,R^{-1}e^{-R^2/32}\,|u|_{0;\Omega},\quad
\text{where }\,\Omega':=\{(t,x)\in\Omega: |x|< R/2\}.
\end{equation}
\end{lemma}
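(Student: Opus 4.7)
The plan is to build an explicit Gaussian supersolution $V$ of $L$ dominating $|u|$ on the parabolic boundary of $\Omega$, then read off \eqref{eq:G01y} via a Gaussian tail estimate. With $M:=|u|_{0;\Omega}$, I would take
\begin{equation*}
V(t,x):=\frac{2M}{\sqrt{4\pi t}}\int_{|y|>R}e^{-(x-y)^2/(4t)}\,dy,
\end{equation*}
so that $V/(2M)$ is the standard heat extension of $\chi_{\{|y|>R\}}$; in particular, $V_t=V_{xx}$ on $(0,\infty)\times\bR$. The factor $2$ is chosen precisely so that $V(t,\pm R)\ge M$ (one of the two contributing integrals is exactly $M$, the other non-negative).

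The key technical step is to show that $V(t,\cdot)$ is convex throughout $[-R,R]$. Using $\partial_x e^{-(x-y)^2/(4t)}=-\partial_y e^{-(x-y)^2/(4t)}$ and integrating by parts twice on each of the two half-lines $\{y>R\}$ and $\{y<-R\}$ gives
\begin{equation*}
V_{xx}(t,x)=\frac{M}{t\sqrt{4\pi t}}\bigl[(R+x)\,e^{-(R+x)^2/(4t)}+(R-x)\,e^{-(R-x)^2/(4t)}\bigr],
\end{equation*}
which is visibly non-negative for $|x|\le R$. Combined with $V_t=V_{xx}$ and \eqref{eq:G01q} this yields $LV=(1-a)V_{xx}\ge 0$ in $\Omega$, so $V$ is a supersolution of $L$.

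On the parabolic boundary of $\Omega$ one has $u=0\le V$ on $\Sigma$ and $V(t,\pm R)\ge M\ge|u|$ on the lateral part. The weak maximum principle for the (possibly degenerate) operator $L$, applied to the $L$-subsolutions $\pm u-V$ (with the routine $\epsilon t$ perturbation to handle the case where $a$ vanishes), then yields $|u|\le V$ in $\Omega$. To finish, for $|x|\le R/2$ one has $R\pm x\ge R/2$, and the standard tail bound $\int_c^\infty e^{-s^2}\,ds\le(2c)^{-1}e^{-c^2}$ with the substitution $s=(y-x)/\sqrt{4t}$ gives
\begin{equation*}
\int_{|y|>R}e^{-(x-y)^2/(4t)}\,dy\le\frac{8t}{R}\,e^{-R^2/(16t)}.
\end{equation*}
Using $t\le 2$ to estimate $\sqrt t\le\sqrt 2$ and $e^{-R^2/(16t)}\le e^{-R^2/32}$, and then simplifying via the identity $8\sqrt 2/\sqrt\pi=16/\sqrt{2\pi}$, produces exactly \eqref{eq:G01y}.

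The main obstacle is the convexity claim for $V$: since $V\to 2M$ as $|x|\to\infty$, $V$ is not globally convex, and the argument really does exploit both the symmetric placement of the initial data around zero and the restriction $|x|\le R$ to ensure the two boundary-term contributions in $V_{xx}$ carry the same non-negative sign. Once convexity is secured, the rest is routine maximum-principle bookkeeping together with the standard Gaussian tail bound.
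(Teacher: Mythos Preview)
Your proof is correct and follows essentially the same strategy as the paper: build a caloric barrier as the heat extension of nonnegative data vanishing on $(-R,R)$, show it is convex in $x$ on $(-R,R)$ so that $LV=(1-a)V_{xx}\ge0$, compare with $\pm u$ via the weak maximum principle, and finish with the Gaussian tail bound. The only cosmetic differences are that the paper uses the $4R$-periodic initial data $2\chi_E$ with $E=\bigcup_j((4j+1)R,(4j+3)R)$ (which makes $v\equiv1$ exactly on $\{|x|=R\}$) and obtains convexity indirectly from $v_{xx}=v_t\ge0$ via a time-shift comparison, whereas you use $2M\chi_{\{|y|>R\}}$ and compute $V_{xx}$ explicitly.
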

\begin{proof}
By changing $u\to u/\abs{u}_{0;\Omega}$, we may assume $\abs{u}_{0;\Omega}=1$.
Let $\Phi(t,x)$ be the fundamental solution of the heat equation
in $(0,\infty)\times \bR$; i.e.,
\begin{equation*}
\Phi(t,x)=\frac{1}{\sqrt{4\pi t}}\,e^{-x^2/4t}.
\end{equation*}
let $v(t,x)$ be a function on $(0,\infty)\times \bR$ defined by
\begin{equation}
\label{eq:G05i}
v(t,x)=2 \int_{E} \Phi(t,x-y)\,dy,\quad\text{where }\,
E:=\bigcup_{j\in\bZ}\,((4j+1)R,(4j+3)R).
\end{equation}
Denote $D=\{(t,x)\in \bR^2: t>0,\,|x|<R\}$.
From \eqref{eq:G05i}, it follows that $v\geq 0$ and satisfies
\begin{equation}
\label{eq:G05j}
\left\{\begin{array}{l}
v_t-v_{xx}=0\quad\text{in}\quad D,\\
v=0\quad\text{on}\quad\partial_t D:= \{(t,x)\in \bR^2: t=0,\,|x|<R\},\\
v=1\quad\text{on}\quad\partial_x D:= \{(t,x)\in \bR^2: t>0,\,|x|=R\}.
\end{array}\right.
\end{equation}
Moreover, by the comparison principle, we see that
$v(t,x)\le v(t+h,x)$ in $D$ for any $h>0$, and thus it follows that
\begin{equation}
\label{eq:G06a}
v_{xx}=v_t\ge 0\quad\text{in}\quad D.
\end{equation}
Then by using \eqref{eq:G01q}, we have
\begin{equation*}
L(v\pm u)=Lv=v_t-a(t,x)v_{xx}\geq v_t-v_{xx}=0\quad\text{in}\quad \Omega.
\end{equation*}
Denote by $\partial_p\Omega$ the parabolic boundary of $\Omega$
(see e.g., \cite{Lieberman} for its definition) and observe that
$\Sigma':=\partial_p\Omega\setminus \Sigma \subset \partial_x D$.
Then, by \eqref{eq:G05j}, we find (recall that we assume $|u|_{0;\Omega}=1$)
\begin{equation*}
v \pm u \geq 0 \quad\text{on}\quad \partial_p\Omega.
\end{equation*}
Therefore, by the maximum principle and \eqref{eq:G06a}, we have
\begin{equation*}
|u(t,x)|\leq v(t,x) \leq v(2,x),\quad\forall (t,x)\in \Omega.
\end{equation*}
On the other hand, for $|x|<R/2$, we estimate $v(2,x)$ by
\begin{align}
\label{eq:G07b}
v(2,x)&=2\int_E \Phi(2,x-y)\,dy \leq 4\int_{R-|x|}^\infty\Phi(2,y)\,dy
\leq 4\int_{R/2}^\infty\Phi(2,y)\,dy\\
\nonumber
&\leq \frac{8}{\sqrt{8\pi}R}\int_{R/2}^\infty ye^{-y^2/8}\,dy=
\frac{16}{\sqrt{2\pi}}\,R^{-1}e^{-R^2/32}.
\end{align}
The lemma is proved.
\end{proof}

\section{Proof of Theorem~\ref{thm:I01}}\label{sec:proof}
For $t\in[0,T]$ and $x\in\bR$, let $X_s=X_s(t,x)$ be the stochastic process
which satisfies \eqref{eq:I08z}.
It is well known that such a process $X_t$ exists; see e.g.,
\cite[Theorem V.1.1]{Krylov}.
Denote
\begin{equation}
\label{eq:P01a}
u_1(t,x)= \bE f_1(X_T(t,x)),\quad
u_2(t,x)= \bE f_2(X_T(t,x)),\quad
\end{equation}
where $f_1(x)=x$ and $f_2(x)=(-x)_{+}$ so that $f(x)=f_1(x)+f_2(x)$.
By \cite[Theorem V.7.4]{Krylov}, the function $u_1$ and its derivatives
$\partial_t u_1$, $\partial_x u_1$, and $\partial_{xx} u_1$  are continuous in
$H_T$ and $u_1$ satisfies the equation \eqref{eq:I05e} there.
In other words, the function $u_1$ is a classical solution of \eqref{eq:I05e}
in $H_T$.
Also, it is readily seen that $u_i \in \cC(\overline H_T)$ ($i=1,2$).
Therefore, it is clear that $u=u_1+u_2$ satisfies the boundary condition
\eqref{eq:I06a}.

Let us further analyze the function $u_2$.
Once we prove that $u_2$ is also a classical solution of \eqref{eq:I05e}
in $H_T$, then we are done.
Let $\{g_k\}_{k=1}^\infty$ be be smooth approximations of $f_2$, say obtained
by using mollifiers, such that $g_k\to f_2$ uniformly.
Denote
\begin{equation*}
v_k(t,x)= \bE g_k(X_T(t,x)).
\end{equation*}
Then by the same reasoning as above, the functions $\{v_k\}_{k=1}^\infty$
are classical solution of \eqref{eq:I05e} in $H_T$.
Note that by interior Schauder estimates, $\Ho$-norm of $v_k$ in any compact set
belonging to $H_T\setminus\{(t,x) : x=b(t)\}$ is estimated through
its supremum over a bounded domain containing the set.
Since $g_k\to f_2$ uniformly, we also have $v_k \to u_2$ uniformly, and thus
we get
\begin{equation*}
u_2\in \Ho_{loc}(\Omega),\quad\text{where }\,
\Omega:=H_T\setminus \{(t,x) : x=b(t)\}
\end{equation*}
and satisfies the equation \eqref{eq:I05e} in $\Omega$.

Next, we claim that $u_2\equiv 0$ in $\{(t,x)\in [0,T]\times\bR :x\geq b(t)\}$.
Note that the process
\begin{eqnarray*}
Y_{s}(t,x):=X_{s}(t,x)-b_{s}\qquad (b_s=b(s))
\end{eqnarray*}
satisfies the following stochastic differential equation:
\begin{equation}
\label{eq:P05f}
\left\{\begin{array}{l}
dY_{s}(t,x)=-Y_{s}(t,x)\,dw_{s}-b'(s)\,ds,\quad s\ge t,\\
Y_{t}(t,x)=x-b(t).
\end{array}\right.
\end{equation}
The solution to \eqref{eq:P05f} is unique and has a representation
\begin{equation*}
Y_s=Y_t\,e^{w_t-w_s+\frac{1}{2}(t-s)}-\int_t^s e^{w_r-w_s+\frac{1}{2}(r-s)}\,
b'(r)\,dr,\quad s\geq t.
\end{equation*}
Therefore, from the assumption $b'\leq 0$, we conclude that $Y_{s}(t,x)\geq 0$
for all $s\geq t$ provided that $Y_{t}(t,x)=x-b(t) \geq 0$.
In particular, we have $X_T(t,x)=X_T(t,x)-b(T)=Y_T(t,x)\ge 0$ if $x\geq b(t)$.
Therefore, from \eqref{eq:P01a} and the fact that
$f_2\equiv 0$ for $x\ge 0$, we find $u_2(x,t)=0$ if $x\ge b(t)$.
We have thus proved the claim that 
$u_2\equiv 0$ in $\{(t,x)\in [0,T]\times\bR :x\geq b(t)\}$.

Now, we will show that $u_2 \in \Co_{loc}(H_T)$.
To comply with standard conventions in parabolic PDE theory, we make a
change of variable $t\mapsto T-t$ and denote
\begin{equation}
\label{eq:P04z}
v(t,x):=u_2(T-t,x)\quad\text{and}\quad \psi(t):=b(T-t).
\end{equation}
By the observations made above, we have
\begin{equation}
\label{eq:P04k}
v\in \cC(\overline H_T)\cap \Ho_{loc}(H_T\setminus\Gamma),\quad\text{where }\,
\Gamma:=\{(t,x)\in H_T : x=\psi(t)\},
\end{equation}
and satisfies the equation
\begin{equation*}
v_t- \tfrac{1}{2}(x-\psi(t))^2 v_{xx}=0 \quad\text{in}\quad
H_T\setminus \Gamma.
\end{equation*}
In order to show that $v\in \Co_{loc}(H_T)$, we need investigate the behavior
of $v$ near $\Gamma$.
By \eqref{eq:I07b}, we find that $\phi:=\psi^{-1}$ is defined on $[0,\ell]$,
where $\ell:=\psi(T)$, and satisfies 
\begin{equation*}
1/m_2\le \phi'(x) \le 1/m_1,\,\text{ for a.e. }\, x\in (0,\ell).
\end{equation*}
In the rest of the proof, we use the following notation.
For $z_0=(t_0,x_0)\in\bR^2$, we denote
\begin{align*}
C_r(z_0)&=\{(t,x)\in \bR^2:|t-t_0|<r,\,|x-x_0|<(m_1/2)r\},\\
\U_r(z_0)&= C_r(z_0) \cap \{(t,x)\in H_T: x<\psi(t)\},\\
\U_r'(z_0)&= \{(t,x)\in \U_r(z_0): |x-x_0|<(m_1/4)r\},\\
\Gamma_r(z_0)&= C_r(z_0)\cap \Gamma.
\end{align*}

\begin{lemma}[Key lemma]
\label{lem:P01}
Let $z_0=(t_0,x_0)=(t_0,\psi(t_0))\in \Gamma$ and $r\in (0,1)$
be any number satisfying $\overline C_r(z_0)\subset D:=(0,T)\times (0,\ell)$.
Then, the function $v$ defined as in \eqref{eq:P04z} satisfies
\begin{equation}
\label{eq:P12t}
|v|_{0;\U_r'(z_0)}\leq N_0 r^{1/2} e^{-k_0/r} |v|_{0;D},
\end{equation}
where $N_0=N_0(m_1,m_2)$ and $k_0=k_0(m_1,m_2)>0$.
Moreover, we have
\begin{equation}
\label{eq:P12q}
r^{3/2}|v_x(t_0+r,x_0)|+ r^3|v_{xx}(t_0+r,x_0)|+ r |v_t(t_0+r,x_0)|
\leq N_1 r^{1/2} e^{-k_0/r} |v|_{0;D},
\end{equation}
where $N_1=N_1(m_1,m_2)$.
\end{lemma}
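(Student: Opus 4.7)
The plan is to prove \eqref{eq:P12t} by an affine rescaling that transforms $(v|_{\U_r(z_0)}, \Gamma)$ into data satisfying the hypotheses of Lemma~\ref{lem:G01}, with new spatial radius $R$ of order $r^{-1/2}$, and then read off the Gaussian bound. The derivative estimate \eqref{eq:P12q} will follow by interior parabolic Schauder estimates applied at $(t_0+r, x_0)$ in a sub-cylinder on which, after a different rescaling, the equation becomes uniformly parabolic with constants controlled by $m_1, m_2$, combined with \eqref{eq:P12t} itself to control $|v|$ on this sub-cylinder.

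For \eqref{eq:P12t}, pick a parameter $R>0$, set $\rho := m_1 r/(2R)$, and define $V(s,y) := v(t_0+r(s-1),\, x_0+\rho y)$ on $Q := (0,2)\times(-R,R)$. Then $C_r(z_0)$ corresponds to $Q$, and $\Gamma$ becomes the graph $\{y = G(s)\}$ with $G(s) := [\psi(t_0+r(s-1))-x_0]/\rho$, which satisfies $G(1)=0$ and $G'(s) = r\psi'(t_0+r(s-1))/\rho \in [2R,\, 2Rm_2/m_1]$. Hence $G(3/2)\geq R$ and $G(1/2)\leq -R$, so $g := G^{-1}$ is well defined on $[-R,R]$ with $g([-R,R]) \subset [1/2,3/2]$; moreover $\U_r$ and $\U_r'$ correspond exactly to $\Omega$ and $\Omega'$ of Lemma~\ref{lem:G01}. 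A direct calculation converts the equation to $V_s - a(s,y)V_{yy} = 0$ with $a(s,y) = \tfrac{r}{2}(y-G(s))^2$; since $|G(s)| \leq 2Rm_2/m_1$ on $[0,2]$, we have $|y-G(s)| \leq R(m_1+2m_2)/m_1$ on $Q$, so the choice $R := c_0\,r^{-1/2}$ with $c_0 := m_1\sqrt{2}/(m_1+2m_2)$ makes $a \leq 1$ throughout $\Omega$. Since $v=0$ on $\Gamma$ yields $V=0$ on $\Sigma$, Lemma~\ref{lem:G01} applies and its conclusion $|V|_{0;\Omega'} \leq (16/\sqrt{2\pi})R^{-1}e^{-R^2/32}|V|_{0;\Omega}$ translates back to \eqref{eq:P12t} with $k_0 = c_0^2/32$.

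For \eqref{eq:P12q}, note that $\psi(t_0+r) - x_0 \geq m_1 r$, so on a window $B := \{|t-(t_0+r)|<\lambda r,\ |x-x_0|<\lambda r^{3/2}\}$ with $\lambda = \lambda(m_1,m_2)$ small (and $r$ small enough that $B\subset D$), $\psi(t)-x$ is comparable to $r$. The rescaling $\tau := (t-(t_0+r))/r$, $\xi := (x-x_0)/r^{3/2}$, $W(\tau,\xi) := v(t,x)$ converts the PDE into $W_\tau = A(\tau,\xi)W_{\xi\xi}$ with $A(\tau,\xi) = (x-\psi(t))^2/(2r^2)$ now uniformly elliptic in $(\tau,\xi)$ and H\"older continuous, with bounds depending only on $m_1, m_2$. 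Standard interior parabolic Schauder estimates then bound $|W_\tau|, |W_\xi|, |W_{\xi\xi}|$ at the center by a constant multiple of $|W|_{0;B_{\text{res}}} = |v|_{0;B}$; unwinding the scales produces exactly the weights $r, r^{3/2}, r^3$ on the left-hand side of \eqref{eq:P12q}. Since $B$ lies inside $\U_{(1+\lambda)r}'(z_0)$, applying \eqref{eq:P12t} with the slightly enlarged radius controls $|v|_{0;B}$ by $C\sqrt{r}\,e^{-k_0/r}\,|v|_{0;D}$, yielding \eqref{eq:P12q}.

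The delicate part is balancing three competing constraints in the rescaling of step one: (i) the rescaled curve must lie inside the strip $\{1/2\leq s\leq 3/2\}\times[-R,R]$, forcing $R \leq rm_1/(2\rho)$; (ii) the rescaled coefficient must satisfy $0 \leq a \leq 1$, which unwinds to essentially $R + rm_2/\rho \leq \sqrt{2/r}$; and (iii) $\U_r'$ must match $\{|y|<R/2\}\cap\Omega$ precisely. These three conditions can be made simultaneously compatible only at the scaling $\rho = m_1 r/(2R)$ with $R$ of precise order $r^{-1/2}$, and the resulting $R^2 \sim 1/r$ is exactly what yields the critical exponential factor $e^{-k_0/r}$ in the final estimate.
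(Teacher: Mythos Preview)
Your proof is correct and essentially identical to the paper's: your rescaling $(s,y)\mapsto(t_0+r(s-1),x_0+\rho y)$ with $\rho=m_1r/(2R)$, $R=c_0r^{-1/2}$ is exactly the paper's linear map $T(t,x)=((t-t_0)/r,(x-x_0)/cr^{3/2})$ with $c=(m_1+2m_2)/\sqrt8$ (one checks $\rho=cr^{3/2}$ and $R=(m_1/2c)r^{-1/2}$), and both feed into Lemma~\ref{lem:G01} with the same $R\sim r^{-1/2}$ and the same constant $k_0=m_1^2/16(m_1+2m_2)^2$. For \eqref{eq:P12q} the paper applies interior Schauder in these same rescaled coordinates on a small box $\Pi_{\rho_0}\subset\Omega_r'$ near the image $(1,0)$ of $(t_0+r,x_0)$, which is equivalent to your second rescaling; your ``slightly enlarged radius'' device is in fact unnecessary, since the parabolic interior estimate at $(t_0+r,x_0)$ uses only the past half of $B$, which already sits inside $\U_r'(z_0)$ for $\lambda$ small.
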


\begin{proof}
Let $T$ be a linear mapping defined by 
\begin{equation}
\label{eq:P12z}
T(t,x):=\left((t-t_0)/r,(x-x_0)/cr^{3/2}\right),
\quad \text{where }\,c:=(m_1+2m_2)/\sqrt{8}.
\end{equation}
We shall denote $\Omega_r:=T\left(\U_r(z_0)\right)$,
$\Sigma_r:=T\left(\Gamma_r(z_0)\right)$, and
\begin{equation*}
Q_r:=T\left(C_r(z_0)\right)=\{(t,x)\in\bR^2:|t|<1,\,|x|<(m_1/2c)r^{-1/2}\}.
\end{equation*}
We also define the functions $w(t,x)$ and $a(t,x)$ on $\overline Q_r$ by
\begin{align}
\label{eq:P13a}
w(t,x)&:=v\circ T^{-1}(t,x)=v(t_0+r t,x_0+cr^{3/2} x),\\
\label{eq:P13b}
a(t,x)&:= \frac{1}{2(cr)^2}\left(x_0+cr^{3/2} x-\psi(t_0+rt)\right)^2.
\end{align}
Then $w \in \Ho_{loc}(\Omega_r)\cap \cC(\overline\Omega_r)$ and satisfies
\begin{equation}
\label{eq:P13s}
\left\{\begin{array}{l}
L w:=w_t-a(t,x)w_{xx}=0\quad\text{in}\quad \Omega_r,\\
w=0\quad\text{on}\quad \Sigma_r,
\end{array}\right.
\end{equation}
Note that $a(t,x)$ satisfies the following inequalities in $Q_r$.
\begin{align}
\label{eq:P19v}
0\leq a(t,x)
&=\frac{1}{2(cr)^2}\left(x_0+cr^{3/2}x-\psi(t_0+rt)+\psi(t_0)-x_0\right)^2\\
\nonumber
&\leq \frac{1}{2(cr)^2}\left(cr^{3/2}|x|+m_2r|t|\,\right)^2
\leq\frac{1}{8c^2}(m_1+2m_2)^2 =1.
\end{align}
Also, observe that $\Sigma_r\subset \{(t,x)\in \bR^2: |t|<1/2\}$.
By \eqref{eq:P13s} and \eqref{eq:P19v}, we may apply Lemma~\ref{lem:G01}
to $u(t,x)=w(t+1,x)$ with $R=(m_1/2c)r^{-1/2}$ to conclude that
\begin{equation}
\label{eq:P14w}
|w|_{0;\Omega_r'} \leq
N r^{1/2}e^{-k_0/r}\,|w|_{0;\Omega_r},
\end{equation}
where $\Omega'_r=T(\U_r'(z_0))$, $N_0=8(m_1+2m_2)/\sqrt{\pi}m_1$,
and $k_0=m_1^2/16(m_1+2m_2)^2$.
It is obvious by \eqref{eq:P13a} that \eqref{eq:P12t} follows from \eqref{eq:P14w}.

Next, we turn to the proof of \eqref{eq:P12q}.
Note that by a similar calculation as in \eqref{eq:P19v}, we have
(recall $0<r<1$)
\begin{equation}
\label{eq:P17a}
\norm{\partial_x a}_{L^\infty(Q_r)}\leq 4(m_1+2m_2),\quad
\norm{\partial_t a}_{L^\infty(Q_r)}\leq 4m_2/(m_1+2m_2).
\end{equation}
Let us denote $\Pi_\rho:=(1-\rho^2,1)\times(-\rho,\rho)$ for $\rho>0$.
Note that if $(t,x)\in \Pi_\rho$, then
\begin{align}
\label{eq:P19w}
a(t,x)
&\geq \frac{1}{2(cr)^2}
\left(|\psi(t_0)-\psi(t_0+r)|-|\psi(t_0+r)-\psi(t_0+rt)|-cr^{3/2}|x|\right)^2\\
\nonumber
&\geq \frac{1}{2(cr)^2} \left(m_1 r-m_2r\rho^2-cr^{3/2}\rho\right)^2
\geq \frac{1}{2c^2} \left(m_1-m_2\rho^2-c\rho\right)^2.
\end{align}
Fix $\rho_0=\rho_0(m_1,m_2) \in (0,1/2]$ such that
\begin{equation*}
m_1-m_2\rho_0^2-c\rho_0 \geq m_1/2\quad\text{and}\quad
\Pi_{\rho_0}\subset \Omega_r'.
\end{equation*}
Then by \eqref{eq:P19v} and \eqref{eq:P19w}, we have
\begin{equation}
\label{eq:P18b}
2m_1/(m_1+2m_2)^2  \leq a(t,x) \leq 1,\quad \forall (t,x)\in \Pi_{\rho_0}.
\end{equation}
By \eqref{eq:P17a}, \eqref{eq:P18b}, and the interior Schauder estimates,
we have
\begin{equation}
\label{eq:P19c}
|w_x(1,0)|+|w_{xx}(1,0)|+|w_t(1,0)|\leq C
|w|_{0;\Pi_{\rho_0}},
\end{equation}
where $C=C(m_1,m_2)$; see e.g. \cite{Lieberman}.
Now, the estimate \eqref{eq:P12q} follows from \eqref{eq:P13a}, \eqref{eq:P14w},
and \eqref{eq:P19c}.
The lemma is proved.
\end{proof}

We are ready to prove that $v\in \Co_{loc}(H_T)$.
We define $v_x=0$ (resp. $v_{xx}=0$, $v_t=0$) on $\Gamma$.
By \eqref{eq:P04k}, it is enough to show that $v_x$ (resp. $v_{xx}, v_t$) is
continuous at each $z_0=(t_0,x_0)\in\Gamma$.
Fix an $r_0=r_0(z_0)\in(0,1)$ such that
$\overline C_{r_0}(z_0)\subset D=(0,T)\times (0,\ell)$.
Note that for any $z_1\in \Gamma_{r_0/4}(z_0)$ and $r<r_0/4$, we have
$C_r(z_1)\subset C_{r_0}(z_0)$.
Therefore, by Lemma~\ref{lem:P01}
\begin{equation}
\label{eq:P29a}
|w(\phi(x)+r,x)| \leq N_1 r^{-\beta} e^{-k_0/r} |v|_{0;D},
\quad \forall r \in (0,r_0/4)\quad\forall x \in (x_0-r_0/4,x_0+r_0/4),
\end{equation}
where $w:=v_x$ (resp. $w:=v_{xx}$, $w:=v_t$) and $\beta=-1$
(resp. $\beta=-5/2$, $\beta=-1/2$).
On the other hand, note that there is some $\delta=\delta(m_1,m_2)>0$ such that
\begin{equation}
\label{eq:P29b}
\U_{\delta r_0}(z_0)\subset
\{(\phi(x)+r,x)\in \bR^2: 0<r<r_0/4, |x-x_0|<r_0/4\}.
\end{equation}
From \eqref{eq:P29a} and \eqref{eq:P29b}, we find that
$\lim_{\rho\to 0}|w|_{0;C_\rho(z_0)}=0$. 
The theorem is proved.
\section{Generalization of Key lemma}\label{sec:extra}
Let $\phi:\bR^n\to\bR$ be a Lipschitz continuous function satisfying
$\|\nabla \phi\|_{L^\infty(\bR^n)}\leq M_0$ for some $M_0 \in (0,\infty)$ and
denote
\begin{equation*}
\Gamma:=\set{(t,x)\in \bR\times \bR^n : t=\phi(x)}.
\end{equation*}
For $z=(t,x)\in \bR\times\bR^n$ and $r>0$, we shall denote
\begin{align*}
C_r(z)&:=\{(s,y)\in \bR\times\bR^n: |s-t|<r, \,
\max_{1\leq k\leq n}|y_k-x_k|<(1/2M_0)r \},\\
\U_r(z)&:=C_r(z) \cap \{(s,y)\in \bR\times \bR^n : s>\phi(y)\},\\
\U_r'(z)&:=\{(s,y)\in \U_r(z): \max_{1\leq k \leq n} |y_k-x_k|<(1/4M_0)r\},\\
\Gamma_r(z)&:=C_r(z)\cap\Gamma.
\end{align*}

\begin{theorem}
\label{thm:E01}
Let $z_0\in \Gamma$ and $r>0$ be given.
Assume that there are numbers $\mu>1$ and $\Lambda>0$ such that
the coefficients $\big(a_{ij}(t,x)\big)_{i,j=1}^n$ satisfy
\begin{equation}
\label{eq:E01a}
0\leq a_{ij}(t,x)\xi_i\xi_j\leq \Lambda\abs{\phi(x)-t}^\mu\abs{\xi}^2,\quad
\forall (t,x)\in C_r(z_0),\quad \forall \xi\in\bR^n.
\end{equation}
Let $u\in\Co_{loc}\big(\U_r(z_0)\big)\cap \cC\big(\overline \U_r(z_0)\big)$
satisfy
\begin{equation*}
\left\{\begin{array}{l}
Lu:= u_t- a_{ij}D_{ij}u=0\quad\text{in}\quad \U_r (z_0),\\
u=0 \quad\text{on}\quad \Gamma_r(z_0).
\end{array}\right.
\end{equation*}
Then the following estimate holds.
\begin{equation}
\label{eq:E02b}
|u|_{0;\U_r'(z_0)}\le N_0 r^{(\mu-1)/2} e^{-k_0 r^{1-\mu}}\, |u|_{0;\U_r(z_0)},
\end{equation}
where $N_0=N_0(n,\mu,\Lambda,M_0)$ and $k_0=k_0(\mu,\Lambda,M_0)>0$.
\end{theorem}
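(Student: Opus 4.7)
The approach is to mimic the proof of Lemma~\ref{lem:P01} step by step: rescale to remove the degeneracy, apply a Gaussian barrier analog of Lemma~\ref{lem:G01}, and translate the estimate back. The only new ingredient needed is a multidimensional version of Lemma~\ref{lem:G01}. I would choose the scaling exponent $\gamma := (\mu+1)/2$ (which specializes to $3/2$ when $\mu = 2$, as in Lemma~\ref{lem:P01}) and define the map $T(t,x) := ((t-t_0)/r,\,(x-x_0)/r^\gamma)$ and the rescaled function $w(s,y) := u \circ T^{-1}(s,y)$. The rescaled equation is $w_s - \widetilde{a}_{ij}(s,y)D_{ij} w = 0$ on $\Omega_r := T(\U_r(z_0))$, with $w = 0$ on $\Sigma_r := T(\Gamma_r(z_0))$, where $\widetilde{a}_{ij} := r^{1-2\gamma}a_{ij}$. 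Since $|\phi(x) - t| \leq C(n,M_0)\,r$ on $C_r(z_0)$ (by the Lipschitz bound on $\phi$ and the relation $t_0 = \phi(x_0)$), \eqref{eq:E01a} yields $0 \leq \widetilde{a}_{ij}\xi_i\xi_j \leq \Lambda'|\xi|^2$ on the rescaled box $Q_r = \{|s|<1,\,|y_k|<R\}$, with $R := r^{-(\mu-1)/2}/(2M_0)$ and $\Lambda' = \Lambda'(n,\mu,\Lambda,M_0)$. After normalization, take $|w|_{0;\Omega_r} = 1$.

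\textbf{Barrier in $n$ dimensions.} Following the idea of Lemma~\ref{lem:G01}, I would build the barrier from the rescaled heat kernel $\Phi_{\Lambda'}(s,y) := (4\pi\Lambda' s)^{-1/2} e^{-y^2/(4\Lambda' s)}$. Let $v(s,y)$ be the 1D function defined by \eqref{eq:G05i} with $\Phi$ replaced by $\Phi_{\Lambda'}$, so that $v_s = \Lambda' v_{yy}$, $v \geq 0$, $v$ is convex in $y$, $v = 1$ on $|y| = R$, and $v(0,\cdot) = 0$. Then set
\[
V(s,y) := \sum_{k=1}^n v(s+s_0,\,y_k),
\]
where the shift $s_0$ is chosen large enough that the initial surface of $V$ lies strictly below $\Sigma_r$; this is possible since $|\phi_r(y)| \leq \sqrt{n}/2$ on $Q_r$ (shrinking the constant in $C_r$ to $1/(2M_0\sqrt n)$ if necessary). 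Because each $v$ is convex in its scalar variable, $D^2 V$ is diagonal and non-negative. Combined with the bound $\widetilde{a}_{ii}(s,y) \leq \Lambda'$ coming from \eqref{eq:E01a}, this gives
\[
\widetilde{a}_{ij} D_{ij} V = \sum_i \widetilde{a}_{ii}\, v_{yy}(s+s_0,y_i) \leq \Lambda'\sum_i v_{yy}(s+s_0,y_i) = V_s,
\]
so $LV \geq 0$ in $\Omega_r$. On each lateral face $|y_k| = R$ one has $V \geq v = 1 \geq |w|$, and on $\Sigma_r$ one has $|w| = 0 \leq V$. The parabolic maximum principle then gives $|w| \leq V$ throughout $\Omega_r$.

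\textbf{Conclusion and main obstacle.} On the interior region $\Omega_r' := T(\U_r'(z_0))$, every coordinate satisfies $|y_k| < R/2$, so a Gaussian estimate like \eqref{eq:G07b} applied to each $v(s+s_0, y_k)$ gives a bound of the form $CR^{-1}e^{-R^2/(C\Lambda')}$. Summing over $k$ and substituting $R = r^{-(\mu-1)/2}/(2M_0)$ yields $|w|_{0;\Omega_r'} \leq N_0\, r^{(\mu-1)/2} e^{-k_0 r^{1-\mu}}$, which after undoing the normalization and rescaling becomes \eqref{eq:E02b}. The main obstacle is constructing the multidimensional barrier $V$: a naive product of 1D barriers does not yield a supersolution, and one must instead take a sum so that $D^2 V$ is diagonal with non-negative entries, which allows the trace-style comparison with $\widetilde{a}_{ij}$. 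This comparison uses precisely the one-sided bound $a_{ij}\xi_i\xi_j \geq 0$ in \eqref{eq:E01a}, together with the upper bound by $\Lambda|\phi(x)-t|^\mu I$; without both, the barrier estimate breaks down.
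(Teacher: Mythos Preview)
Your approach is essentially the same as the paper's. The paper performs the same anisotropic rescaling $T(t,x)=\big((t-t_0)/r,\,(x-x_0)/cr^{(1+\mu)/2}\big)$ (with an explicit constant $c=\Lambda^{1/2}(3/2)^{\mu/2}$ so that the rescaled coefficients satisfy $0\le\tilde a_{ij}\xi_i\xi_j\le|\xi|^2$ rather than $\le\Lambda'|\xi|^2$), and then uses exactly the barrier $V(t,x)=\sum_{k=1}^n v(t+1,x_k)$ built from the one-dimensional function $v$ of Lemma~\ref{lem:G01}; the diagonal-Hessian/convexity argument you identify as the ``main obstacle'' is precisely the computation the paper carries out.
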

\begin{proof}
The proof is a slight modification of that in Lemma~\ref{lem:G01}.
By renormalizing $u$ to $u/\abs{u}_{0;\U_r(z_0)}$,
we may assume $\abs{u}_{0;\U_r(z_0)}=1$.
Let $T$ be a linear mapping defined by
\begin{equation}
\label{eq:E02h}
T(t,x):=\left((t-t_0)/r,(x-x_0)/cr^{(1+\mu)/2}\right),
\quad \text{where }\,c:=\Lambda^{1/2} (3/2)^{\mu/2}.
\end{equation}
Denote $\Omega_r:=T\left(\U_r(z_0)\right)$,
$\Omega_r':=T\left(\U_r'(z_0)\right)$,
$\Sigma_r:=T\left(\Gamma_r(z_0)\right)$, and
\begin{equation}
\label{eq:E03f}
Q_r:=T\left(C_r(z_0)\right) =\{(t,x)\in\bR\times \bR^n:|t|<1,\,
\max_{1\leq k \leq n}|x_k|<(1/2cM_0)r^{(1-\mu)/2}\}.
\end{equation}
Define the functions $w(t,x)$ and $\tilde a_{ij}(t,x)$ on $\overline \Omega_r$
and $Q_r$, respectively, by
\begin{align}
\label{eq:E13a}
w(t,x)&:=u\circ T^{-1}(t,x)=u(t_0+r t,x_0+cr^{(1+\mu)/2} x),\\
\label{eq:E13b}
\tilde a_{ij}(t,x)&:= (c^2 r^\mu)^{-1}\,a_{ij}(t_0+r t,x_0+cr^{(1+\mu)/2} x).
\end{align}
Then $w \in \Co_{loc}(\Omega_r)\cap \cC(\overline\Omega_r)$ and satisfies
\begin{equation}
\label{eq:E13s}
\left\{\begin{array}{l}
\tilde L w:=w_t-\tilde a_{ij}(t,x)D_{ij}w=0\quad\text{in}\quad \Omega_r,\\
w=0\quad\text{on}\quad \Sigma_r,
\end{array}\right.
\end{equation}
By \eqref{eq:E01a} and \eqref{eq:E13a}, for all $(t,x)\in Q_r$
and $\xi\in \bR^n$, we have
\begin{align}
\label{eq:E19v}
0\leq \tilde a_{ij}(t,x) \xi_i \xi_j
&\leq \frac{\Lambda}{c^2 r^\mu}\left(|\phi(x_0+cr^{(1+\mu)/2}x)-\phi(x_0)|
+r|t|\right)^\mu |\xi|^2\\
\nonumber
&\leq \frac{\Lambda}{c^2 r^\mu}
\left(M_0 cr^{(1+\mu)/2}|x|+r|t|\,\right)^\mu |\xi|^2 \leq
\frac{\Lambda}{c^2}(3/2)^\mu |\xi|^2 =|\xi|^2.
\end{align}
Let $v$ be given as in \eqref{eq:G05i} with $R=(1/2cM_0)r^{(1-\mu)/2}$
and define
\begin{equation}
\label{eq:E20t}
V(t,x)=V(t,x_1,\ldots,x_n):= \sum_{k=1}^n v(t+1,x_k).
\end{equation}
Then, since $v_{xx} \geq 0$ by \eqref{eq:G06a} and $\tilde a_{kk}\leq 1$,
for all $k=1,\ldots,n$, by \eqref{eq:E19v}, we have
\begin{align*}
\tilde L V &= V_t- \tilde a_{ij} D_{ij} V=
\sum_{k=1}^n\left( v_t(t,x_k)- \tilde a_{kk} v_{xx}(t,x_k)\right) \\
\nonumber
&\geq \sum_{k=1}^n\left( v_t(t,x_k)- v_{xx}(t,x_k)\right)=0
\quad\text{in}\quad Q_r.
\end{align*}
Note that by \eqref{eq:G05j}, $V \geq 1$ on
$\partial_x Q_r:=\{(t,x)\in \bR\times\bR^n: |t|<1,\, |x_k|=R,\,
\forall k=1,\ldots,n\}$.
Also, observe that $\Sigma_r\subset \{(t,x)\in Q_r: |t|<1/2\}$.
Therefore, we have $V \geq |w|$ on the parabolic boundary $\partial_p\Omega_r$
of $\Omega_r$.
Then, by the comparison principle, we obtain
\begin{equation}
\label{eq:E21a}
|w(t,x)|\leq V(t,x), \quad\forall (t,x)\in \Omega_r.
\end{equation}
On the other hand, by \eqref{eq:G06a}, \eqref{eq:G07b}, and \eqref{eq:E20t},
we have (recall $R=(1/2cM_0)r^{(1-\mu)/2}$)
\begin{equation}
\label{eq:E22b}
V(t,x)\leq (32ncM_0/\sqrt{2\pi}) r^{(\mu-1)/2} e^{-r^{1-\mu}/128c^2M_0^2}
\quad\forall(t,x) \in \Omega_r'.
\end{equation}
We obtain \eqref{eq:E02b} by combining \eqref{eq:E13a}, \eqref{eq:E21a},
and \eqref{eq:E22b}. The theorem is proved.
\end{proof}

\begin{theorem}
\label{thm:E02}
Let $\bar z_0\in \Gamma$ and $R>0$ be given.
Assume that there are numbers $\mu>1$ and $\lambda, \Lambda, M_1>0$ such that
the coefficients $\big(a_{ij}(t,x)\big)_{i,j=1}^n$ satisfy
\begin{gather}
\label{eq:E31a}
\lambda\abs{\phi(x)-t}^\mu\abs{\xi}^2 \leq
a_{ij}(t,x)\xi_i\xi_j\leq \Lambda\abs{\phi(x)-t}^\mu\abs{\xi}^2,\quad
\forall (t,x)\in C_R(\bar z_0),\quad \forall \xi\in\bR^n,\\
\label{eq:E31b}
|\nabla_{t,x} a_{ij}(t,x)|\leq M_1 \abs{\phi(x)-t}^{\mu-1},\quad
\text{for a.e. }\, (t,x)\in C_R(\bar z_0).
\end{gather}
Suppose $u\in\Ho_{loc}\big(\U_R(\bar z_0)\big)\cap \cC\big(\overline \U_R(\bar z_0)\big)$,
for some $\alpha\in(0,1)$, and satisfies
\begin{equation*}
\left\{\begin{array}{l}
Lu:= u_t- a_{ij}D_{ij}u=0\quad\text{in}\quad \U_R (\bar z_0),\\
u=0 \quad\text{on}\quad \Gamma_R(\bar z_0).
\end{array}\right.
\end{equation*}
Then if we extend $u\equiv 0$ in $C_R(\bar z_0)\setminus \U_R(\bar z_0)$,
we have $u\in \Co_{loc}(C_{R/2}(\bar z_0))$
\end{theorem}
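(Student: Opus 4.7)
The plan is to extend the endgame of the proof of Theorem~\ref{thm:I01} to the present setting, using Theorem~\ref{thm:E01} in place of Lemma~\ref{lem:P01}. Inside $\U_R(\bar z_0)$ we already have $u\in\Ho_{loc}$, so $u_t,D_iu,D_{ij}u$ are continuous there; on $C_R(\bar z_0)\setminus\overline{\U_R(\bar z_0)}$ the extension $u\equiv 0$ is smooth. Hence it suffices to show that $u$ itself and each of $u_t,D_iu,D_{ij}u$ extend continuously as $0$ across $\Gamma_{R/2}(\bar z_0):=\Gamma\cap C_{R/2}(\bar z_0)$.

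Fix $z_0=(t_0,x_0)\in\Gamma_{R/2}(\bar z_0)$ and choose $r_0>0$ with $\overline C_{r_0}(z_0)\subset C_R(\bar z_0)$. For any $z_1=(t_1,x_1)\in\Gamma_{r_0/4}(z_0)$ and any $r\in(0,r_0/4)$ the cylinder $C_r(z_1)$ lies in $C_R(\bar z_0)$, so Theorem~\ref{thm:E01} applies to $u$ at scale $r$ centered at $z_1$. Rescale as in \eqref{eq:E02h}--\eqref{eq:E13b} to obtain $w$ and $\tilde a_{ij}$ on $\Omega_r$, and fix once and for all a small rectangle
\begin{equation*}
\Pi_{\rho_0}:=(1-\rho_0^2,1)\times\{x\in\bR^n:\max_k|x_k|<\rho_0\}\Subset\Omega_r',
\end{equation*}
with $\rho_0=\rho_0(n,\mu,\Lambda,M_0)\in(0,1/2]$ chosen so that $\Pi_{\rho_0}$ stays a definite parabolic distance from $\Sigma_r$ for every admissible $r$.

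The crux is to verify that on $\Pi_{\rho_0}$ the rescaled operator $\tilde L$ is uniformly parabolic with uniformly Lipschitz coefficients, both constants being independent of $r$ and $z_1$. The upper bound $\tilde a_{ij}\xi_i\xi_j\le|\xi|^2$ is exactly \eqref{eq:E19v}. The lower bound uses \eqref{eq:E31a}: for $(t,x)\in\Pi_{\rho_0}$, the identity $t_1=\phi(x_1)$ together with the Lipschitz bound $M_0$ on $\phi$ yields $|\phi(x_1+cr^{(1+\mu)/2}x)-(t_1+rt)|\ge c_1 r$ for some $c_1=c_1(\mu,\Lambda,M_0)>0$, and then \eqref{eq:E31a} gives $\tilde a_{ij}\xi_i\xi_j\ge\lambda c_1^\mu c^{-2}|\xi|^2$. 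The Lipschitz bound on $\tilde a_{ij}$ follows from differentiating \eqref{eq:E13b}: the chain rule introduces extra factors $r$ and $cr^{(1+\mu)/2}$, while \eqref{eq:E31b} gives $|\nabla_{t,x}a_{ij}|=O(r^{\mu-1})$ on $\Pi_{\rho_0}$, and these powers precisely cancel the prefactor $(c^2 r^\mu)^{-1}$, leaving $r$-independent bounds. This uniform ellipticity and coefficient-regularity check is the only real obstacle.

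With these uniform bounds, the interior parabolic Schauder estimates on $\Pi_{\rho_0}$ yield
\begin{equation*}
|w_t(1,0)|+\sum_i|D_iw(1,0)|+\sum_{i,j}|D_{ij}w(1,0)|\le C\,|w|_{0;\Pi_{\rho_0}},
\end{equation*}
which, combined with Theorem~\ref{thm:E01} and the scaling \eqref{eq:E13a}, gives
\begin{equation*}
r\,|u_t(t_1+r,x_1)|+r^{(1+\mu)/2}\sum_i|D_iu(t_1+r,x_1)|+r^{1+\mu}\sum_{i,j}|D_{ij}u(t_1+r,x_1)|\le C\,r^{(\mu-1)/2}e^{-k_0 r^{1-\mu}}|u|_{0;\U_R(\bar z_0)}.
\end{equation*}
Because $\mu>1$, the factor $e^{-k_0 r^{1-\mu}}$ dominates every negative power of $r$, so each of $|u_t|,|D_iu|,|D_{ij}u|$ at $(t_1+r,x_1)$ tends to $0$ as $r\to 0$, uniformly in $z_1\in\Gamma_{r_0/4}(z_0)$. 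A geometric argument modelled on \eqref{eq:P29b}---every point of $\U_R(\bar z_0)$ sufficiently close to $z_0$ is of the form $(t_1+r,x_1)$ for some admissible $z_1$ and some $r$ comparable to the parabolic distance to $\Gamma$---then yields $\lim_{z\to z_0}u_t(z)=\lim_{z\to z_0}D_iu(z)=\lim_{z\to z_0}D_{ij}u(z)=0$, and combined with the analogous $C^0$ limit (which comes directly from \eqref{eq:E02b}) this establishes $u\in\Co_{loc}(C_{R/2}(\bar z_0))$.
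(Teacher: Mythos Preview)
Your proposal is correct and follows essentially the same route as the paper: apply Theorem~\ref{thm:E01} at each $z_1\in\Gamma$ near $z_0$, rescale via \eqref{eq:E02h}--\eqref{eq:E13b}, verify that on a fixed box $\Pi_{\rho_0}$ the rescaled coefficients are uniformly elliptic with uniformly Lipschitz bounds, invoke interior Schauder estimates, and undo the scaling to obtain the analogue of \eqref{eq:E42q}. One small remark: for the spatial derivatives of $\tilde a_{ij}$ the powers do not \emph{precisely} cancel---there is a leftover factor $r^{(\mu-1)/2}$---but since one may take $r<1$ this still yields $r$-independent bounds, so the argument goes through unchanged.
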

\begin{proof}
Let $z_0=(t_0,x_0)=(\phi(x_0),x_0) \in \Gamma_{R/2}(\bar z_0)$ and
let $0<r<\min(1,R/2)$ so that $r<1$ and $C_r(z_0)\subset C_R(\bar z_0)$.
Then, by \eqref{eq:E02b} of Theorem~\ref{thm:E01} we find
\begin{equation}
\label{eq:E32b}
|u|_{0;\U_r'(z_0)}\le N_0 r^{(\mu-1)/2} e^{-k_0 r^{1-\mu}}\, |u|_{0;\U_R(\bar z_0)},
\end{equation}
Let $T$, $Q_r$, $w(t,x)$, and $\tilde a_{ij}(t,x)$ be defined
as in \eqref{eq:E02h} -- \eqref{eq:E13b}.
Then, by \eqref{eq:E31b} we have
\begin{equation}
\label{eq:E22y}
\|\nabla_{t,x} \tilde a_{ij}\|_{L^\infty(Q_r)}\leq C M_1,
\quad\text{where }\,C=C(\Lambda,\mu).
\end{equation}
Denote $\Pi_\rho:=(1-\rho^2,1)\times (-\rho,\rho)^n$.
Note that if $(t,x)\in \Pi_\rho$, then we have 
\begin{equation*}
|\phi(x_0+cr^{(1+\mu)/2}x)-(t_0+rt)|\geq
|r(1-\rho^2)-M_0 cr^{(1+\mu)/2}\rho| \geq r|1-\rho^2-M_0c\rho|.
\end{equation*}
Let us fix a number $\rho_0=\rho_0(\mu,\Lambda,M_0) \in (0,1/2)$ such that
$|1-\rho^2-M_0c\rho|\geq (1/2)^{1/\mu}$ and $\Pi_{\rho_0}\subset \Omega'_r$.
Then, it follows from \eqref{eq:E13b} and \eqref{eq:E31a} that
\begin{equation}
\label{eq:E22x}
\tilde a_{ij}(t,x) \xi_i \xi_j \geq (\lambda/2c^2) |\xi|^2 = (\lambda/\Lambda)
2^{\mu-1}3^{-\mu}|\xi|^2.
\end{equation}
Then by \eqref{eq:E22y}, \eqref{eq:E19v}, \eqref{eq:E22x}, and the interior
Schauder estimate, we have
\begin{equation}
\label{eq:E39c}
|D_x w(1,0)|+|D_x^2 w(1,0)|+|w_t(1,0)|\leq C
|w|_{0;\Pi_{\rho_0}},
\end{equation}
where $C=C(n,\alpha,\mu,\lambda,\Lambda,M_0,M_1)$.
Therefore, by using \eqref{eq:E13a} and \eqref{eq:E32b}, we conclude
\begin{align}
\label{eq:E42q}
r^{(1+\mu)/2}|D_x u(t_0+r,x_0)| &+ r^{1+\mu}|D_x^2 u(t_0+r,x_0)|+ r |u_t(t_0+r,x_0)|\\
\nonumber
&\leq N_1 r^{(\mu-1)/2} e^{-k_0 r^{1-\mu}} |u|_{0;\U_R(\bar z_0)},
\end{align}
where $N_1=N_1(n,\alpha,\mu,\lambda,\Lambda,M_0,M_1)$.
Finally, by using \eqref{eq:E42q} instead of \eqref{eq:P12q} and proceeding
similarly as in the proof of Theorem~\ref{thm:I01}, we see that
$u\in \Co_{loc}(C_{R/2}(\bar z_0))$.
This completes the proof.
\end{proof}

\noindent
\textbf{Acknowledgments} The author is grateful to Mikhail Safonov and Jan Ve\v{c}e\v{r} for very helpful discussion.


\end{document}